\documentclass[a4paper,11pt]{article}
\usepackage[utf8]{inputenc}
\usepackage[T1]{fontenc}
\usepackage{graphicx}
\usepackage{amsmath,amsfonts,amssymb,amsthm}
\usepackage{mathtools}
\usepackage{mathrsfs}
\usepackage{bm}
\usepackage{color}
\usepackage{fullpage}

\definecolor{myred}{rgb}{0.77, 0.0, 0.1}
\definecolor{crimson}{rgb}{0.86, 0.08, 0.24}
\definecolor{awesome}{rgb}{1.0, 0.13, 0.32}
\definecolor{newgreen}{rgb}{0.0,0.6,0.0}
\definecolor{malachite}{rgb}{0.04, 0.85, 0.32}
\definecolor{pastelgreen}{rgb}{0.47, 0.87, 0.47}
\definecolor{myturq}{rgb}{0.1, 0.7, 0.7}

\usepackage{hyperref}
\hypersetup{
  colorlinks,
  linkcolor=blue, %
  citecolor=blue, %
  linktoc=all %
}

\usepackage{relsize}

\renewcommand{\leq}{\leqslant}
\renewcommand{\geq}{\geqslant}

\newcommand{\mleq}{\preccurlyeq}

\newcommand{\eps}{\varepsilon}

\newcommand{\argmin}{\mathop{\mathrm{arg}\,\mathrm{min}}}

\newcommand{\wt}{\widetilde}
\newcommand{\wh}{\widehat}

\newcommand{\R}{\mathbf R}

\usepackage{xspace}
 
\newcommand{\eg}{e.g.\@\xspace}
\newcommand{\iid}{i.i.d.\@\xspace}

\newcommand{\tr}{\mathrm{Tr}}

\newcommand{\opnorm}[1]{\| {#1} \|_{\mathrm{op}}}

\newcommand{\norm}[1]{\|#1\|}

\newcommand{\E}{\mathbb E}

\newcommand{\Var}{\mathrm{Var}}
\newcommand{\var}{\Var}

\newcommand{\cond}{|}%

\newcommand{\X}{\mathcal{X}}

\newcommand{\excessrisk}{\mathcal{E}}

\newtheorem{theorem}{Theorem}
\newtheorem{lemma}{Lemma}

\theoremstyle{definition}

\newtheorem{assumption}{Assumption}%

\theoremstyle{remark}

\title{An elementary analysis of ridge regression with random design}

\author{Jaouad Mourtada\footnote{CREST, ENSAE, Palaiseau, France;  \tt{jaouad.mourtada@ensae.fr}} \ \quad and \quad Lorenzo Rosasco\footnote{Universita’ di Genova \& Istituto Italiano di Tecnologia, Genova, Italy; Center for Brains, Minds and Machines,  MIT, Cambridge, United States; \tt{lorenzo.rosasco@unige.it}}}
\date{\today}

\begin{document}

\maketitle

\begin{abstract}
  In this note, we provide an elementary analysis of the prediction error of ridge regression with random design.
  The proof is short and self-contained.
  In particular, it bypasses the use of Rudelson's deviation inequality for covariance matrices, through a combination of exchangeability arguments, matrix perturbation and operator convexity.

  \medskip
  \noindent\textbf{Keywords.} Ridge regression; reproducing kernel Hilbert spaces; covariance matrices.
\end{abstract}

\section{Introduction}
\label{sec:introduction}

Let $(X, Y)$ be a random vector in $\R^d \times \R$ with distribution $P$.
We consider the problem of random-design regression, namely prediction of $Y$ by linear functions of $X$.
(This setting also allows to consider nonlinear functions of general covariates $X'$, taking values in a measurable space $\X'$, by letting $X = \Phi (X')$ for some feature map $\Phi : \X' \to \R^d$.)
Specifically, the prediction error of a regression parameter $\theta \in \R^d$ is defined by its \emph{risk}
$L (\theta) = \E [ (Y - \langle \theta, X\rangle)^2 ]$, where $\langle \theta, x\rangle = \theta^\top x$ is the standard scalar product on $\R^d$.
In the statistical setting, the true distribution $P$, and in particular the (population) risk $L : \R^d \to \R^+$ and its minimizer $\theta^* = \argmin_{\theta \in \R^d} L (\theta)$, are unknown.
The aim is then, given a random independent and identically distributed (\iid) sample $(X_1, Y_1), \dots, (X_n, Y_n)$ from $P$, to find a good parameter $\wh \theta$, as measured by its \emph{excess risk}
\begin{equation}
  \label{eq:excess-risk}
  \excessrisk (\wh \theta)
  = L (\wh \theta) - L (\theta^*)
  \, .
\end{equation}

A popular approach to this problem is the method of regularized least squares (also called empirical risk minimization), where the estimator $\wh \theta$ minimizes the sum of an empirical error term and a regularization term favoring some structure on the parameter.
In this note, we consider the classical \emph{ridge} estimator~\cite{hoerl1962ridge,tikhonov1963regularization}, defined for $\lambda > 0$ by
\begin{equation}
  \label{eq:ridge-estimator}
  \wh \theta_\lambda
  := \argmin_{\theta \in \R^d} \bigg[ \frac 1 n  \sum_{i=1}^n (Y_i - \langle \theta, X _i \rangle)^2 +\lambda \|\theta\|^2 \bigg]
  = (\wh \Sigma_n + \lambda)^{-1} \cdot \frac{1}{n} \sum_{i=1}^n Y_i X_i
  \, ,
\end{equation}
where $\norm{\theta} := \langle \theta, \theta\rangle^{1/2}$ is the Euclidean norm and $\wh \Sigma_n := n^{-1} \sum_{i=1}^n X_i X_i^\top$ is the empirical covariance matrix.
While this estimator is well-studied (see Section~\ref{sec:discussion}), our aim here is to present a short and elementary analysis of its performance.
In particular, the analysis presented here does not rely on matrix concentration~\cite{rudelson1999random,ahlswede2002strong,oliveira2010sums,tropp2012user} or uniform deviation bounds for empirical processes~\cite{massart2000some,koltchinskii2006local,bartlett2005local}, but rather on a combination of exchangeability and matrix convexity arguments.
It draws inspiration from an analysis of~\cite{mourtada2022logistic} in the context of conditional density estimation.
Our main error estimate is provided in Theorem~\ref{thm:risk-bound-ridge} below.

\paragraph*{Notation.}

Given a $d \times d$ matrix $A$, we denote by $\tr (A)$ its trace and $\opnorm{A}$ its operator norm.
The $d\times d$ identity matrix is denoted $I_d$, or simply $I$; for $\lambda \in \R$, we denote $A + \lambda = A+ \lambda I$.
The symbol $\mleq$ denotes the standard order on symmetric matrices: $A \mleq B$ means that $\langle A v, v\rangle \leq \langle B v, v\rangle$ for all $v \in \R^d$.

\section{Risk analysis of ridge regression}
\label{sec:risk-analysis-ridge}

\paragraph*{Assumptions.}
The analysis requires two assumptions on the joint distribution $P$ of $(X, Y)$, the first one on the
distribution of the error $Y - \langle \theta^*, X\rangle$,
and the second one on the distribution of $X$.

\begin{assumption}
  \label{ass:wellspec}
  There exist $\theta^*\in \R^d$ and $\sigma>0$ such that
  \begin{equation}\label{eq:cond}
    \E [ Y \cond X ] = \langle \theta^*, X\rangle,\quad \text{and} \quad \var (Y \cond X) \leq \sigma^2.
  \end{equation}
\end{assumption}

The first condition in Assumption~\ref{ass:wellspec} states that the linear model is well-specified, in the sense that the true regression function $x \mapsto \E [Y|X=x]$ is linear.
This condition is standard, although restrictive when the dimension $d$ is low.
On the other hand, the guarantees we consider do not explicit depend on the dimension $d$, and extend with minor changes in notation to the case where $\R^d$ is replaced by an infinite-dimensional Hilbert space.
This allows to handle the case of reproducing kernel Hilbert spaces~\cite{aronszajn1950theory} (such as certain Sobolev spaces), for which ridge regression is a classical estimator~\cite{wahba1990spline,steinwart2008svm}.
When considering a ``universal'' kernel, the corresponding Hilbert space is dense in the space $L^2 (P_X)$ of square-integrable functions of $X$ \cite{steinwart2008svm}, in which case  the well-specified assumption is considerably less restrictive. The main issue is then the dependence of the bound on $\theta^*$.
In this respect, the bound of Theorem~\ref{thm:risk-bound-ridge} will only depend on $\theta^*$ through the approximation properties of balls of the Hilbert space.
Finally, the second condition in Assumption~\ref{ass:wellspec} is a bound on the conditional variance of $Y$ given $X$, which controls the amount of noise.
It holds for instance if $Y$ is bounded, or if the error $Y - \langle \theta^*, X\rangle$ is independent of $X$ with finite variance.

\begin{assumption}
  \label{ass:bounded-X}
   There exists a constant $R > 0$ such that $\| X \| \leq R$ almost surely.
\end{assumption}

The boundedness assumption~\ref{ass:bounded-X} is classical in the context of ridge regression.
This condition is automatically satisfied, for instance, in the case where $X$ is the feature associated to a bounded reproducing kernel Hilbert space~\cite{steinwart2008svm}.
Assumption~\ref{ass:bounded-X} implies in particular that the covariance matrix $\Sigma = \E [X X^\top]$ of $X$ is well-defined and satisfies $\tr (\Sigma) = \E \norm{X}^2 \leq R^2$.

\paragraph*{Risk analysis of ridge regression.}

The proof hinges on two main lemmas.
Before presenting them, we start with a classical bias-variance decomposition.

\begin{lemma}[Error decomposition]
  \label{lem:bias-var-ridge}
  Under Assumptions~\ref{ass:wellspec} and~\ref{ass:bounded-X}, we have for every $\lambda>0$,
  \begin{equation}
    \label{eq:bias-var-ridge}
    \E [ \excessrisk (\wh \theta_\lambda) ]
    \leq \lambda^2 \E [ \langle (\wh \Sigma_n + \lambda)^{-1} \Sigma (\wh \Sigma_n + \lambda)^{-1} \theta^*, \theta^*\rangle ]
    + \frac{\sigma^2}{n} \cdot \E [ \tr \{ (\wh \Sigma_n + \lambda)^{-1} \Sigma \} ]
    \, .
  \end{equation}
\end{lemma}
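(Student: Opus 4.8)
The plan is to reduce everything to the $\Sigma$-weighted geometry on $\R^d$, where $\Sigma = \E[XX^\top]$. First I would record the elementary identity $\excessrisk(\theta) = \langle \Sigma(\theta-\theta^*),\theta-\theta^*\rangle$ valid for every $\theta \in \R^d$: expanding $L(\theta) = \E[(Y-\langle\theta,X\rangle)^2]$ and using the first condition of Assumption~\ref{ass:wellspec}, which yields the normal equation $\Sigma\theta^* = \E[YX]$, the terms linear in $\theta-\theta^*$ cancel and only the quadratic remainder survives. Specializing to $\theta = \wh\theta_\lambda$ rewrites the left-hand side of~\eqref{eq:bias-var-ridge} as $\E[\langle\Sigma(\wh\theta_\lambda-\theta^*),\wh\theta_\lambda-\theta^*\rangle]$.

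Next I would split $\wh\theta_\lambda - \theta^*$ into a ``noise'' part and a ``bias'' part. Setting $\xi_i := Y_i - \langle\theta^*,X_i\rangle$ and $\eta := \frac1n\sum_{i=1}^n \xi_i X_i$, the closed form~\eqref{eq:ridge-estimator} gives $(\wh\Sigma_n+\lambda)\wh\theta_\lambda = \frac1n\sum_i Y_iX_i = \wh\Sigma_n\theta^* + \eta$, hence $\wh\theta_\lambda - \theta^* = (\wh\Sigma_n+\lambda)^{-1}\eta - \lambda(\wh\Sigma_n+\lambda)^{-1}\theta^*$. I would expand the $\Sigma$-quadratic form of this sum into three terms and then condition on $X_1,\dots,X_n$: since $\E[\xi_i\mid X_i] = \E[Y_i\mid X_i] - \langle\theta^*,X_i\rangle = 0$ by Assumption~\ref{ass:wellspec}, we get $\E[\eta\mid X_1,\dots,X_n] = 0$, so the cross term vanishes in expectation. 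The pure ``bias'' term is exactly $\lambda^2\,\E[\langle(\wh\Sigma_n+\lambda)^{-1}\Sigma(\wh\Sigma_n+\lambda)^{-1}\theta^*,\theta^*\rangle]$ (using that $(\wh\Sigma_n+\lambda)^{-1}$ is symmetric), which is the first term of~\eqref{eq:bias-var-ridge}.

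It remains to bound the ``variance'' term $\E\big[\langle\Sigma(\wh\Sigma_n+\lambda)^{-1}\eta,(\wh\Sigma_n+\lambda)^{-1}\eta\rangle\big] = \E\big[\tr\{(\wh\Sigma_n+\lambda)^{-1}\Sigma(\wh\Sigma_n+\lambda)^{-1}\,\eta\eta^\top\}\big]$. Conditioning on the design once more, independence of the sample kills the off-diagonal terms, $\E[\xi_i\xi_j\mid X_1,\dots,X_n] = 0$ for $i\neq j$, while the second condition of Assumption~\ref{ass:wellspec} gives $\E[\xi_i^2\mid X_i] = \var(Y_i\mid X_i)\leq\sigma^2$; hence $\E[\eta\eta^\top\mid X_1,\dots,X_n] = \frac1{n^2}\sum_i\E[\xi_i^2\mid X_i]\,X_iX_i^\top \mleq \frac{\sigma^2}{n}\wh\Sigma_n$. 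Taking the trace against the positive semidefinite matrix $(\wh\Sigma_n+\lambda)^{-1}\Sigma(\wh\Sigma_n+\lambda)^{-1}$ bounds the conditional expectation by $\frac{\sigma^2}{n}\tr\{\Sigma\,(\wh\Sigma_n+\lambda)^{-1}\wh\Sigma_n(\wh\Sigma_n+\lambda)^{-1}\}$ (after a cyclic rearrangement); then the operator inequality $(\wh\Sigma_n+\lambda)^{-1}\wh\Sigma_n(\wh\Sigma_n+\lambda)^{-1} \mleq (\wh\Sigma_n+\lambda)^{-1}$, obtained by conjugating $\wh\Sigma_n\mleq\wh\Sigma_n+\lambda$ by $(\wh\Sigma_n+\lambda)^{-1}$, together with the monotonicity of $A\mapsto\tr(\Sigma A)$ on positive semidefinite $A$, gives the clean bound $\frac{\sigma^2}{n}\tr\{(\wh\Sigma_n+\lambda)^{-1}\Sigma\}$. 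Taking the remaining expectation over the design produces the second term of~\eqref{eq:bias-var-ridge}.

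As for difficulty: there is no serious obstacle, since this is the classical bias--variance split and every step is elementary. The two points that need a little care are (i) conditioning on the \emph{entire} design $X_1,\dots,X_n$ at once so that the cross term genuinely has zero expectation, rather than trying to argue sample by sample, and (ii) the matrix bookkeeping in the last display, where one must keep the ``test'' matrix positive semidefinite in order to apply trace monotonicity; both are taken care of by the observations above.
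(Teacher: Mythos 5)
Your proposal is correct and follows essentially the same route as the paper: the same decomposition of $\wh\theta_\lambda-\theta^*$ into a bias part and a noise part, the vanishing of the cross term by conditioning on the design, and the same final operator inequality $(\wh\Sigma_n+\lambda)^{-1}\wh\Sigma_n(\wh\Sigma_n+\lambda)^{-1}\mleq(\wh\Sigma_n+\lambda)^{-1}$ combined with trace monotonicity. The only (cosmetic) difference is that you aggregate the noise into $\eta\eta^\top$ and bound its conditional expectation as a matrix, whereas the paper sums the diagonal contributions term by term.
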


\begin{proof}
  Let  $Y = \langle \theta^*, X\rangle + \eps$, so that  $\E [ \eps \cond X ] = 0$ and $\E [ \eps^2 \cond X ] \leq \sigma^2$ almost surely. Then, 
  \begin{equation*}
    \wh \theta_\lambda
    = (\wh \Sigma_n + \lambda)^{-1} \frac{1}{n} \sum_{i=1}^n Y_i X_i
    = (\wh \Sigma_n + \lambda)^{-1} \wh \Sigma_n \theta^* + (\wh \Sigma_n + \lambda)^{-1} \frac{1}{n} \sum_{i=1}^n \eps_i X_i \, ,
  \end{equation*}
  so that
  \begin{equation*}
    \wh \theta_\lambda - \theta^*
    = - \lambda (\wh \Sigma_n + \lambda)^{-1} \theta^* + (\wh \Sigma_n + \lambda)^{-1} \frac{1}{n} \sum_{i=1}^n \eps_i X_i
    \, .
  \end{equation*}
  Moreover, it is standard (by Pythagoras' theorem in $L^2$) that $\excessrisk(\wh \theta_\lambda)= \| \wh \theta_\lambda - \theta^* \|_\Sigma^2$, where  $\|v\|^2_\Sigma=\langle \Sigma v, v\rangle$ for all $v\in \R^d$.
  Since $\E [ \eps_i \cond X_1, \dots, X_n ] = 0$ and $\E [ \eps_i^2 \cond X_1, \dots, X_n ] \leq \sigma^2$, then
  \begin{align*}
    \E [ \excessrisk(\wh \theta_\lambda) ]
    &= \E [ \| \wh \theta_\lambda - \theta^* \|_\Sigma^2 ] \\
    &= \lambda^2 \E \| (\wh \Sigma_n + \lambda)^{-1} \theta^* \|_\Sigma^2 + \frac{1}{n^2} \E \Big[ \sum_{i=1}^n  \| (\wh \Sigma_n + \lambda)^{-1} \eps_i X_i \|_\Sigma^2 \Big] \\
    &\leq \lambda^2 \E \| (\wh \Sigma_n + \lambda)^{-1} \theta^* \|_\Sigma^2 + \frac{\sigma^2}{n^2} \E \Big[ \sum_{i=1}^n  \tr \{ \Sigma (\wh \Sigma_n + \lambda)^{-1} X_i X_i^\top (\wh \Sigma_n + \lambda)^{-1} \} \Big] \\
    &= \lambda^2 \E [\langle (\wh \Sigma_n + \lambda)^{-1} \Sigma (\wh \Sigma_n + \lambda)^{-1} \theta^*, \theta^*\rangle] + \frac{\sigma^2}{n} \E [ \tr \{ (\wh \Sigma_n + \lambda)^{-1} \wh \Sigma_n (\wh \Sigma_n + \lambda)^{-1} \Sigma \} ]
      .
  \end{align*}
  The bound~\eqref{eq:bias-var-ridge} ensues by further bounding $(\wh \Sigma_n + \lambda)^{-1} \wh \Sigma_n (\wh \Sigma_n + \lambda)^{-1} \mleq (\wh \Sigma_n + \lambda)^{-1}$, and using that $\tr (A \Sigma) \leq \tr (B \Sigma)$ for symmetric matrices $A, B$ since $\Sigma$ is positive semi-definite.
\end{proof}
Lemma~\ref{lem:bias-var-ridge} shows the main quantities that need to be controlled in the random-design setting.
The first term in~\eqref{eq:bias-var-ridge} is a bias term, due to the use of a regularization favoring solutions with small norm. The second one is a variance term due to the presence of errors $\eps_i = Y_i - \langle \theta^*, X_i\rangle$.
Both terms depend on the (random) sample covariance matrix $\wh \Sigma_n$, as well as the population covariance matrix $\Sigma$.
The fact that both of these matrices appear comes from the fact that, in the random-design/statistical learning setting, one is interested in making a prediction at a new point $X$, rather than at the points $X_1, \dots, X_n$ in the sample.

In order to argue that the sample covariance matrix $\wh \Sigma_n$ is ``close'' to $\Sigma$ in a suitable sense and deduce an explicit error bound, some assumption on the distribution of $X$ is needed.
This is where Assumption~\ref{ass:bounded-X} is used.
Under this assumption, one can apply Rudelson's inequality for sample covariance matrices~\cite{rudelson1999random}; this is the approach adopted, for instance, in~\cite{caponnetto2007optimal,hsu2014ridge}.
Rudelson's inequality, a consequence of the non-commutative Khintchine inequality of~\cite{lustpiquard1991noncommutative}, is however a non-trivial result, despite subsequent simplifications to its proof~\cite{ahlswede2002strong,oliveira2010sums,tropp2012user}.
In addition, matrix concentration through Rudelson's inequality introduces an additional logarithmic term~\cite{tropp2012user,vershynin2012introduction}.

In what follows, we present an alternative approach to controlling the random matrix terms in the right-hand side of~\eqref{eq:bias-var-ridge}, which only uses short and elementary arguments.
The proof relies on a combination of exchangeability, matrix perturbation and operator convexity.
It draws inspiration from an analysis of~\cite{mourtada2022logistic}, where similar arguments were used in the context of conditional density estimation and logistic regression.

\begin{lemma}
  \label{lem:trace-df} 
  Under Assumption~\ref{ass:bounded-X}, we have that for every $\lambda > 0$, 
    \begin{equation}
      \label{eq:tr-random-design-upper-lower}
      \tr [ (\Sigma + \lambda I)^{-1} \Sigma ]
      \leq \E \tr [ (\wh \Sigma_n + \lambda I)^{-1} \Sigma ]
      \leq \Big( 1 + \frac{R^2}{\lambda n} \Big) \cdot \tr [ (\Sigma + \lambda I)^{-1} \Sigma ]
      \, .
    \end{equation}
  \end{lemma}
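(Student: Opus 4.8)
The plan is to establish the two inequalities separately: the left one is essentially a one-line consequence of operator convexity, while the right one carries the bulk of the work and is where exchangeability and matrix perturbation come in.

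For the lower bound, I would use that $A \mapsto (A + \lambda I)^{-1}$ is operator convex on the cone of positive semi-definite matrices, so that Jensen's inequality in operator form gives $\E[(\wh\Sigma_n + \lambda I)^{-1}] \mgeq (\E[\wh\Sigma_n] + \lambda I)^{-1} = (\Sigma + \lambda I)^{-1}$. Multiplying by $\Sigma \mgeq 0$ and taking traces --- using, as in Lemma~\ref{lem:bias-var-ridge}, that $\tr(A\Sigma) \le \tr(B\Sigma)$ whenever $A \mleq B$ --- yields $\E\tr[(\wh\Sigma_n + \lambda I)^{-1}\Sigma] \ge \tr[(\Sigma + \lambda I)^{-1}\Sigma]$.

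For the upper bound, the idea is to introduce an extra independent copy $X_0$ of $X$ and exploit exchangeability of $X_0, \dots, X_n$. First, writing $\Sigma = \E[X_0 X_0^\top \mid X_1,\dots,X_n]$, we get $\E\tr[(\wh\Sigma_n + \lambda I)^{-1}\Sigma] = \E[X_0^\top(\wh\Sigma_n + \lambda I)^{-1}X_0]$. Next, set $S := \wh\Sigma_n + \tfrac1n X_0 X_0^\top = \tfrac1n\sum_{i=0}^n X_i X_i^\top$ and apply the Sherman--Morrison formula: with $u := X_0^\top(\wh\Sigma_n + \lambda I)^{-1}X_0$, a short computation gives $X_0^\top(S + \lambda I)^{-1}X_0 = u/(1 + u/n)$. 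Since Assumption~\ref{ass:bounded-X} forces $u \le R^2/\lambda$ and $t \mapsto t/(1+t/n)$ is increasing, this rearranges to $u \le (1 + \tfrac{R^2}{\lambda n})\,X_0^\top(S + \lambda I)^{-1}X_0$. Now $S$ is a symmetric function of the exchangeable vectors $X_0,\dots,X_n$, so $\E[X_j^\top(S+\lambda I)^{-1}X_j]$ is independent of $j$; averaging over $j = 0,\dots,n$ turns the quadratic form into a trace, $\E[X_0^\top(S+\lambda I)^{-1}X_0] = \tfrac{n}{n+1}\E\tr[(S+\lambda I)^{-1}S]$. Finally $(S+\lambda I)^{-1}S = I - \lambda(S+\lambda I)^{-1}$, so operator convexity again, with $\E[S] = \tfrac{n+1}{n}\Sigma$, gives $\E\tr[(S+\lambda I)^{-1}S] \le \tr[(\tfrac{n+1}{n}\Sigma + \lambda I)^{-1}\tfrac{n+1}{n}\Sigma]$; monotonicity of matrix inversion (since $\tfrac{n+1}{n}\Sigma + \lambda I \mgeq \Sigma + \lambda I$) then lets the remaining $\tfrac{n}{n+1}$ absorb the $\tfrac{n+1}{n}$, leaving $\tr[(\Sigma + \lambda I)^{-1}\Sigma]$. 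Chaining these steps produces exactly the factor $1 + R^2/(\lambda n)$.

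I expect the main obstacle to be conceptual rather than computational: finding the right auxiliary object, namely passing from $n$ points to $n+1$ exchangeable points so that the target becomes a symmetric trace, and recognizing that Sherman--Morrison converts ``the cost of adding one more sample'' into precisely the multiplicative slack $1 + R^2/(\lambda n)$ controlled by boundedness. The remaining care points --- justifying the operator form of Jensen's inequality for $A \mapsto (A+\lambda I)^{-1}$, and the bookkeeping of the $\tfrac{n+1}{n}$ factors --- are routine. As a sanity check (and an alternative avoiding exchangeability), one can instead expand $(\wh\Sigma_n + \lambda I)^{-1}$ to second order via the resolvent identity, kill the first-order term using $\E[\wh\Sigma_n] = \Sigma$, bound the remainder with $(\wh\Sigma_n+\lambda I)^{-1} \mleq \lambda^{-1}I$ and $\E[(\wh\Sigma_n - \Sigma)^2] \mleq \tfrac{R^2}{n}\Sigma$ (from $\E[\|X\|^2 X X^\top] \mleq R^2\Sigma$), and conclude with $\tr[((\Sigma+\lambda I)^{-1}\Sigma)^2] \le \tr[(\Sigma+\lambda I)^{-1}\Sigma]$; this route gives the same bound.
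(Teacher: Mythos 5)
Your proof is correct and follows essentially the same route as the paper's: convexity of $A \mapsto \tr(A^{-1}\Sigma)$ plus Jensen for the lower bound, and for the upper bound the same passage to $n+1$ exchangeable points via Sherman--Morrison (your $n(S+\lambda I)$ is exactly the paper's $(n+1)\wh\Sigma_{n+1}+\lambda n I$), with only cosmetic differences in where the $\tfrac{n+1}{n}$ bookkeeping and the concavity step are applied. The second-order resolvent expansion you sketch at the end is a genuinely different, also valid, argument, but as presented it is only a side remark rather than the main proof.
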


  \begin{proof}
    The lower bound comes from convexity of $A \mapsto \tr (A^{-1} \Sigma)$ over positive matrices (see Lemma~\ref{lem:operator-convexity-inverse} below) and Jensen's inequality.
    Let us now prove the upper bound.
    We start by writing:
    \begin{equation*}
      \E \big[ \tr ( (\wh\Sigma_n + \lambda I)^{-1} \Sigma ) \big]
      = n \, \E \big[ \langle (n \wh \Sigma_n + \lambda n I)^{-1} X_{n+1}, X_{n+1} \rangle  \big]
      \, ,
    \end{equation*}
    where $X_{n+1}$ is a random variable distributed as $X$ and independent of $X_1, \dots, X_n$.
    Now, the Sherman-Morrison identity~\eqref{eq:sherman-lemma} with $S = n \wh \Sigma_n + \lambda n I$ and $v = X_{n+1}$ shows that
    \begin{align*}
      &\langle (n \wh \Sigma_n + \lambda n I)^{-1} X_{n+1}, X_{n+1} \rangle \\
      &= \big( 1 + n^{-1} \langle (\wh \Sigma_n + \lambda I)^{-1} X_{n+1}, X_{n+1} \rangle \big) \langle (n \wh \Sigma_n + X_{n+1} X_{n+1}^\top + \lambda n I)^{-1} X_{n+1}, X_{n+1} \rangle \\
      &\leq \Big( 1 + \frac{R^2}{\lambda n} \Big) \big\langle \big( (n+1) \wh \Sigma_{n+1} + \lambda n I \big)^{-1} X_{n+1}, X_{n+1} \big\rangle
    \end{align*}
    where $\wh \Sigma_{n+1} = (n+1)^{-1} \sum_{i=1}^{n+1} X_i X_i^\top$, and where
    we used that, by Assumption~\ref{ass:bounded-X}, 
    $\langle (\wh \Sigma_n + \lambda I)^{-1} X_{n+1}, X_{n+1} \rangle \leq \| X_{n+1} \|^2 / \lambda \leq R^2/\lambda$.
    It follows that
    \begin{align}      
      \E \big[ &\tr ( (\wh\Sigma_n + \lambda I)^{-1} \Sigma ) \big] 
      \leq n \Big( 1 + \frac{R^2}{\lambda n} \Big) \E [ \langle ((n+1) \wh \Sigma_{n+1} + \lambda n I)^{-1} X_{n+1}, X_{n+1} \rangle ] \nonumber \\
      &= n \Big( 1 + \frac{R^2}{\lambda n} \Big) \cdot \frac{1}{n+1} \sum_{i=1}^{n+1} \E [ \tr \{ ((n+1) \wh \Sigma_{n+1} + \lambda n I)^{-1} X_i X_i^\top \} ] \label{eq:proof-trace-exchangeability} \\
      &= \Big( 1 + \frac{R^2}{\lambda n} \Big) \cdot \E [ \tr \{ ((1+1/n) \wh \Sigma_{n+1} + \lambda I)^{-1} \wh \Sigma_{n+1} \} ] \nonumber \\
      &\leq \Big( 1 + \frac{R^2}{\lambda n} \Big) \E [ \tr \{ (\wh \Sigma_{n+1} + \lambda I)^{-1} \wh \Sigma_{n+1} \} ] \nonumber \\
      &\leq \Big( 1 + \frac{R^2}{\lambda n} \Big) \tr [ (\Sigma + \lambda I)^{-1} \Sigma ] \label{eq:proof-trace-convexity}
    \end{align}
    where~\eqref{eq:proof-trace-exchangeability} follows from exchangeability of $(X_1, \dots, X_{n+1})$, while~\eqref{eq:proof-trace-convexity} follows from concavity of the map $A \mapsto \tr [ (A + \lambda I)^{-1} A ]$ over positive matrices (by Lemma~\ref{lem:operator-convexity-inverse}).
\end{proof}

Next, we turn to controlling the bias term in Lemma~\ref{lem:bound-matrix-bias} below.
The proof follows a similar recipe as that of Lemma~\ref{lem:trace-df}.

\begin{lemma}
  \label{lem:bound-matrix-bias}
  Under Assumption~\ref{ass:bounded-X}, we have that for every $\lambda>0$,
  \begin{equation}
    \label{eq:bound-operator-bias-ridge}
    \E [ (\wh \Sigma_n + \lambda)^{-1} \Sigma (\wh \Sigma_n + \lambda)^{-1} ]
    \mleq \Big( 1 + \frac{R^2}{\lambda n} \Big)^2 \lambda^{-1} (\Sigma + \lambda)^{-1} \Sigma
    \, .
  \end{equation}  
\end{lemma}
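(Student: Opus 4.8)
The proof should follow the same template as Lemma~\ref{lem:trace-df}, but now tracking a matrix inequality (in the $\mleq$ order) rather than a scalar trace inequality. The key observation is that the bias term $(\wh\Sigma_n+\lambda)^{-1}\Sigma(\wh\Sigma_n+\lambda)^{-1}$ is awkward because $\Sigma$ is sandwiched; I would first rewrite $\Sigma = \E[X_{n+1}X_{n+1}^\top]$ with $X_{n+1}$ an independent copy, so that, as a bilinear form applied to a fixed $\theta^*$,

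\begin{equation*}
  \big\langle (\wh\Sigma_n+\lambda)^{-1}\Sigma(\wh\Sigma_n+\lambda)^{-1}\theta^*,\theta^*\big\rangle
  = \E\big[\,\langle (\wh\Sigma_n+\lambda)^{-1}X_{n+1},\theta^*\rangle^2\,\big\vert\, X_1,\dots,X_n\big]\,.
\end{equation*}

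The plan is to control $\langle(\wh\Sigma_n+\lambda)^{-1}X_{n+1},\theta^*\rangle$ using Sherman--Morrison to pass from $\wh\Sigma_n$ (built from $X_1,\dots,X_n$) to $\wh\Sigma_{n+1}$ (which includes $X_{n+1}$), then exploit exchangeability of $(X_1,\dots,X_{n+1})$ to symmetrize, and finally invoke operator convexity (Lemma~\ref{lem:operator-convexity-inverse}) to replace $\wh\Sigma_{n+1}$ by $\Sigma$.

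\textbf{Step 1 (Sherman--Morrison).} With $S = \wh\Sigma_n + \lambda I$ and rank-one update $\tfrac1n X_{n+1}X_{n+1}^\top$, one has $(1+\tfrac1n)\wh\Sigma_{n+1}+\lambda I = S + \tfrac1n X_{n+1}X_{n+1}^\top$, and the Sherman--Morrison identity~\eqref{eq:sherman-lemma} gives
\begin{equation*}
  (\wh\Sigma_n+\lambda)^{-1}X_{n+1}
  = \Big(1 + \tfrac1n\langle(\wh\Sigma_n+\lambda)^{-1}X_{n+1},X_{n+1}\rangle\Big)\big((1+\tfrac1n)\wh\Sigma_{n+1}+\lambda I\big)^{-1}X_{n+1}\,.
\end{equation*}
The scalar prefactor is at most $1 + R^2/(\lambda n)$ by Assumption~\ref{ass:bounded-X}, exactly as in the proof of Lemma~\ref{lem:trace-df}. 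Squaring the inner product with $\theta^*$ therefore costs a factor $(1+R^2/(\lambda n))^2$, and replaces $(\wh\Sigma_n+\lambda)^{-1}$ by $((1+1/n)\wh\Sigma_{n+1}+\lambda I)^{-1} \mleq (\wh\Sigma_{n+1}+\lambda I)^{-1}$.

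\textbf{Step 2 (exchangeability).} After Step 1 and taking expectations, the quantity to bound is
\begin{equation*}
  \Big(1+\tfrac{R^2}{\lambda n}\Big)^2\,\E\big[\langle(\wh\Sigma_{n+1}+\lambda)^{-1}X_{n+1},\theta^*\rangle^2\big]\,.
\end{equation*}
Since $\wh\Sigma_{n+1}$ is symmetric in $X_1,\dots,X_{n+1}$, averaging over which index plays the role of $X_{n+1}$ and noting $\sum_{i=1}^{n+1}\langle A X_i,\theta^*\rangle^2 = \langle A^\top\theta^*, \big(\sum_i X_iX_i^\top\big)A^\top\theta^*\rangle = (n+1)\langle A^\top\theta^*,\wh\Sigma_{n+1}A^\top\theta^*\rangle$ with $A=(\wh\Sigma_{n+1}+\lambda)^{-1}$, we get $\E[\langle(\wh\Sigma_{n+1}+\lambda)^{-1}X_{n+1},\theta^*\rangle^2] = \E\big[\langle(\wh\Sigma_{n+1}+\lambda)^{-1}\wh\Sigma_{n+1}(\wh\Sigma_{n+1}+\lambda)^{-1}\theta^*,\theta^*\rangle\big]$. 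Using $(\wh\Sigma_{n+1}+\lambda)^{-1}\wh\Sigma_{n+1} \mleq I$ and $(\wh\Sigma_{n+1}+\lambda)^{-1}\mleq\lambda^{-1}$ on one of the two factors bounds this by $\lambda^{-1}\E\big[\langle(\wh\Sigma_{n+1}+\lambda)^{-1}\theta^*,\theta^*\rangle\big]$.

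\textbf{Step 3 (operator convexity).} By Lemma~\ref{lem:operator-convexity-inverse}, $A\mapsto (A+\lambda I)^{-1}$ is operator convex on positive matrices; hence $x\mapsto\langle(A+\lambda)^{-1}x,x\rangle$ composed with $A\mapsto A$ is convex, and Jensen gives $\E[(\wh\Sigma_{n+1}+\lambda)^{-1}]\mgeq(\Sigma+\lambda)^{-1}$ — wait, convexity gives the \emph{wrong} direction here. This is the main obstacle: I instead need to re-express $\langle(\wh\Sigma_{n+1}+\lambda)^{-1}\theta^*,\theta^*\rangle$ so that concavity applies. The fix is to absorb one more factor of $\wh\Sigma_{n+1}$: since $\lambda^{-1}\langle(\wh\Sigma_{n+1}+\lambda)^{-1}\theta^*,\theta^*\rangle = \lambda^{-2}\langle\theta^*,\theta^*\rangle - \lambda^{-2}\langle(\wh\Sigma_{n+1}+\lambda)^{-1}\wh\Sigma_{n+1}\theta^*,\theta^*\rangle$... rather, cleanly: keep the form $\langle(\wh\Sigma_{n+1}+\lambda)^{-1}\wh\Sigma_{n+1}(\wh\Sigma_{n+1}+\lambda)^{-1}\theta^*,\theta^*\rangle$ from Step 2, bound only \emph{one} resolvent by $\lambda^{-1}$, and leave $(\wh\Sigma_{n+1}+\lambda)^{-1}\wh\Sigma_{n+1} = I - \lambda(\wh\Sigma_{n+1}+\lambda)^{-1}$; but $A\mapsto\lambda^2(A+\lambda)^{-1}$ is operator convex, so $A\mapsto I-\lambda(A+\lambda)^{-1} = \lambda^{-1}(A+\lambda)^{-1}A$ is operator \emph{concave}. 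Therefore Jensen yields $\E[(\wh\Sigma_{n+1}+\lambda)^{-1}\wh\Sigma_{n+1}]\mleq(\Sigma+\lambda)^{-1}\Sigma$, and combining with the $\lambda^{-1}$ bound on the remaining resolvent gives precisely $\lambda^{-1}(\Sigma+\lambda)^{-1}\Sigma$ as claimed.

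\textbf{Expected main difficulty.} The delicate point is the bookkeeping in Step 2--3: one must split the two resolvent factors asymmetrically — estimate one factor deterministically by $\lambda^{-1}I$, and retain the combination $(\wh\Sigma_{n+1}+\lambda)^{-1}\wh\Sigma_{n+1}$ on which the operator \emph{concave} map (not the convex one) acts, so that Jensen goes the right way. Getting the exchangeability averaging to produce exactly this combination (rather than, say, $(\wh\Sigma_{n+1}+\lambda)^{-1}$ alone, to which Jensen would apply with the wrong sign) is what makes the argument work, and is the step I would be most careful about.
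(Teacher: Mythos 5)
Your overall architecture is exactly the paper's: introduce an independent copy $X_{n+1}$ to write $\Sigma = \E[X_{n+1}X_{n+1}^\top]$, apply Sherman--Morrison to absorb $X_{n+1}$ into $\wh\Sigma_{n+1}$ at the cost of a factor $(1+R^2/(\lambda n))^2$, symmetrize by exchangeability to produce $(\wh\Sigma_{n+1}+\lambda)^{-1}\wh\Sigma_{n+1}(\wh\Sigma_{n+1}+\lambda)^{-1}$, peel off one resolvent as $\lambda^{-1}I$, and finish with Jensen applied to the operator \emph{concave} map $A\mapsto (A+\lambda)^{-1}A$. Steps~2 and~3, including your diagnosis that one must keep the combination $(A+\lambda)^{-1}A$ rather than $(A+\lambda)^{-1}$ alone so that concavity (not convexity) is what Jensen needs, are correct and match the paper.

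There is, however, a genuine flaw in your Step~1. After Sherman--Morrison you have, exactly,
\begin{equation*}
  \langle(\wh\Sigma_n+\lambda)^{-1}X_{n+1},\theta^*\rangle^2
  = \Big(1+\tfrac1n\langle(\wh\Sigma_n+\lambda)^{-1}X_{n+1},X_{n+1}\rangle\Big)^2\,
  \langle T^{-1}X_{n+1},\theta^*\rangle^2,
  \qquad T := (1+\tfrac1n)\wh\Sigma_{n+1}+\lambda I\,,
\end{equation*}
and you then claim that $T^{-1}\mleq(\wh\Sigma_{n+1}+\lambda I)^{-1}$ lets you replace $T^{-1}$ by $(\wh\Sigma_{n+1}+\lambda I)^{-1}$ inside the squared inner product. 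This is not a valid monotonicity step: $A\mleq B$ does \emph{not} imply $\langle Av,\theta\rangle^2\leq\langle Bv,\theta\rangle^2$, equivalently $Avv^\top A\mleq Bvv^\top B$, because $Av$ and $Bv$ need not be parallel (take $\theta$ orthogonal to $Bv$ but not to $Av$; a $2\times 2$ diagonal example with $\wh\Sigma_{n+1}=\mathrm{diag}(1,0)$, $\lambda=1$, $v=(1,1)^\top$ already breaks it). The Loewner comparison is only usable for quadratic forms $\langle A v,v\rangle$ with the \emph{same} vector on both sides, which is why the analogous step is fine in Lemma~\ref{lem:trace-df} but not here. The repair is to postpone the comparison: carry the shifted regularization $\lambda'=\lambda n/(n+1)$ (so that $T=(1+\tfrac1n)(\wh\Sigma_{n+1}+\lambda')$ exactly, with no inequality) through the exchangeability step; after symmetrization the two sides become commuting functions of $\wh\Sigma_{n+1}$ alone (respectively of $\Sigma$ after Jensen), and the comparison $\tfrac{n}{(n+1)\lambda}(\Sigma+\lambda')^{-1}\Sigma\mleq\lambda^{-1}(\Sigma+\lambda)^{-1}\Sigma$ reduces to a scalar eigenvalue check. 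This is precisely how the paper's proof is ordered. With that reordering your argument goes through.
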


\begin{proof}
  Similarly to the proof of Lemma~\ref{lem:trace-df}, we start by writing:
  \begin{equation*}
    \E [ (\wh \Sigma_n + \lambda)^{-1} \Sigma (\wh \Sigma_n + \lambda)^{-1} ]
    = n^2 \, \E [ (n \wh \Sigma_n + \lambda n)^{-1} X_{n+1} X_{n+1}^\top (n \wh \Sigma_n + \lambda n)^{-1} ]
    \, .
  \end{equation*}
  Next, the Sherman-Morrison identity~\eqref{eq:sherman-lemma} applied to $S = n \wh \Sigma_n + \lambda n$ and $v = X_{n+1}$ implies that
  \begin{align*}
    &(n \wh \Sigma_n + \lambda n)^{-1} X_{n+1} X_{n+1}^\top (n \wh \Sigma_n + \lambda n)^{-1} \\
    &= (1 + \langle (n \wh \Sigma_n + \lambda n)^{-1} X_{n+1}, X_{n+1}\rangle)^2 (n \wh \Sigma_n + \lambda n + X_{n+1} X_{n+1}^\top)^{-1}  X_{n+1}  X_{n+1}^\top ( n \wh \Sigma_n + \lambda n + X_{n+1} X_{n+1}^\top)^{-1} \\
    &\mleq \Big( 1 + \frac{R^2}{\lambda n} \Big)^2 (n+1)^{-2} (\wh \Sigma_{n+1} + \lambda')^{-1} X_{n+1} X_{n+1}^\top (\wh \Sigma_{n+1} + \lambda')^{-1}
  \end{align*}
  with $\lambda' = \lambda n / (n+1)$, where we used that $\langle (\wh \Sigma_n + \lambda)^{-1} X_{n+1}, X_{n+1}\rangle \leq R^2 / \lambda$.
  It follows that, by exchangeability of $(X_1, \dots, X_{n+1})$,
  \begin{align*}
    \E [ (\wh \Sigma_n + \lambda)^{-1} \Sigma (\wh \Sigma_n + \lambda)^{-1} ]
    &= n^2 \E [ (n \wh \Sigma_n + \lambda n)^{-1} X_{n+1} X_{n+1}^\top (n \wh \Sigma_n + \lambda n)^{-1} ] \\
    &\mleq \Big( 1 + \frac{R^2}{\lambda n} \Big)^2 \frac{n^2}{(n+1)^2} \E \Big[ (\wh \Sigma_{n+1} + \lambda')^{-1} X_{n+1} X_{n+1}^\top (\wh \Sigma_{n+1} + \lambda')^{-1} \Big] \\
    &= \Big( 1 + \frac{R^2}{\lambda n} \Big)^2 \frac{n^2}{(n+1)^2} \frac{1}{n+1} \sum_{j=1}^{n+1} \E \Big[ (\wh \Sigma_{n+1} + \lambda')^{-1} X_{j} X_{j}^\top (\wh \Sigma_{n+1} + \lambda')^{-1} \Big] \\
    &= \Big( 1 + \frac{R^2}{\lambda n} \Big)^2 \frac{n^2}{(n+1)^2} \E \big[ (\wh \Sigma_{n+1} + \lambda')^{-1} \wh \Sigma_{n+1} (\wh \Sigma_{n+1} + \lambda')^{-1} \big] \\
    &\mleq \Big( 1 + \frac{R^2}{\lambda n} \Big)^2 \frac{n^2}{(n+1)^2} \lambda'^{-1} \E \big[ (\wh \Sigma_{n+1} + \lambda')^{-1} \wh \Sigma_{n+1} \big] \\
    &\mleq \Big( 1 + \frac{R^2}{\lambda n} \Big)^2 \frac{n^2}{(n+1)^2} \lambda'^{-1} (\Sigma + \lambda')^{-1} \Sigma 
  \end{align*}
  where the last inequality follows from operator concavity of $x \mapsto x (x + \lambda')^{-1}$ over $\R^+$ (a consequence of Lemma~\ref{lem:operator-convexity-inverse}).
  Inequality~\eqref{eq:bound-operator-bias-ridge} is then obtained after substituting $\lambda' = \lambda n / (n+1)$.
\end{proof}

Before deriving the excess risk bound, we express the bias term in a more relatable form.
\begin{lemma}
  \label{lem:reform-bias-ridge}
  For every $\lambda>0$,
  \begin{equation*}
    \lambda \| (\Sigma + \lambda)^{-1/2} \Sigma^{1/2} \theta^* \|^2
    = \inf_{\theta \in \R^d} \big\{ L (\theta) + \lambda \| \theta \|^2 \big\} - L (\theta^*)
    \, .
  \end{equation*}  
\end{lemma}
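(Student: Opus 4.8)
The plan is to reduce the claim to an elementary finite-dimensional quadratic minimization and then simplify with functional calculus on $\Sigma$. First, recall from the proof of Lemma~\ref{lem:bias-var-ridge} the identity $L(\theta) - L(\theta^*) = \| \theta - \theta^* \|_\Sigma^2$, which holds for \emph{every} $\theta \in \R^d$ (Pythagoras in $L^2$: write $Y - \langle \theta, X\rangle = (Y - \langle \theta^*, X\rangle) + \langle \theta^* - \theta, X\rangle$ and use that $\E[(Y - \langle\theta^*,X\rangle)\langle v, X\rangle] = 0$ for all $v$, by the first part of Assumption~\ref{ass:wellspec}). Consequently
\begin{equation*}
  \inf_{\theta \in \R^d} \big\{ L(\theta) + \lambda \|\theta\|^2 \big\} - L(\theta^*)
  = \inf_{\theta \in \R^d} \big\{ \| \theta - \theta^* \|_\Sigma^2 + \lambda \|\theta\|^2 \big\}
  \, ,
\end{equation*}
so it remains to compute the minimum of the strictly convex quadratic $q(\theta) := \langle \Sigma(\theta - \theta^*), \theta - \theta^*\rangle + \lambda\langle\theta,\theta\rangle$.

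Next I would expand $q(\theta) = \langle (\Sigma + \lambda)\theta, \theta\rangle - 2\langle \Sigma\theta^*, \theta\rangle + \langle \Sigma\theta^*, \theta^*\rangle$ and complete the square. Since $\Sigma + \lambda I$ is positive definite and hence invertible, the unique minimizer is $\theta_\lambda := (\Sigma + \lambda)^{-1}\Sigma\theta^*$ (the population ridge solution), and the minimal value is
\begin{equation*}
  q(\theta_\lambda)
  = \langle \Sigma\theta^*, \theta^*\rangle - \langle (\Sigma + \lambda)\theta_\lambda, \theta_\lambda\rangle
  = \langle \Sigma\theta^*, \theta^*\rangle - \langle \Sigma(\Sigma + \lambda)^{-1}\Sigma\theta^*, \theta^*\rangle
  = \big\langle \big(\Sigma - \Sigma(\Sigma + \lambda)^{-1}\Sigma\big)\theta^*, \theta^*\big\rangle
  \, .
\end{equation*}

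Finally I would simplify the operator $\Sigma - \Sigma(\Sigma + \lambda)^{-1}\Sigma$: since $\Sigma$ commutes with $(\Sigma + \lambda)^{-1}$, it equals $\Sigma(\Sigma + \lambda)^{-1}\big((\Sigma + \lambda) - \Sigma\big) = \lambda(\Sigma + \lambda)^{-1}\Sigma$, whence $q(\theta_\lambda) = \lambda\langle (\Sigma + \lambda)^{-1}\Sigma\theta^*, \theta^*\rangle = \lambda\|(\Sigma + \lambda)^{-1/2}\Sigma^{1/2}\theta^*\|^2$, using once more that $\Sigma^{1/2}$ and $(\Sigma + \lambda)^{-1/2}$ commute. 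This is exactly the stated identity. There is no real obstacle here: the only points deserving minor care are the $L^2$-Pythagoras identity for arbitrary $\theta$ (already invoked in Lemma~\ref{lem:bias-var-ridge}) and the bookkeeping when completing the square; in the infinite-dimensional Hilbert space setting one notes that all operators appearing are bounded functions of the trace-class operator $\Sigma$, so the manipulations go through verbatim.
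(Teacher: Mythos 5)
Your proof is correct and follows essentially the same route as the paper: reduce to minimizing $\|\theta-\theta^*\|_\Sigma^2+\lambda\|\theta\|^2$, identify the population ridge solution $\theta_\lambda=(\Sigma+\lambda)^{-1}\Sigma\theta^*$ as the minimizer, and simplify using that all operators involved are commuting functions of $\Sigma$. The only (harmless) difference is that you complete the square, which explicitly verifies that $\theta_\lambda$ is the minimizer, whereas the paper simply substitutes $\theta_\lambda$ and evaluates.
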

\begin{proof}
  Letting $\theta_\lambda = (\Sigma + \lambda)^{-1} \Sigma \theta^*$, direct computations show that:
  \begin{align*}
    \inf_{\theta \in \R^d} \big\{ L (\theta) + \lambda \| \theta \|^2 \big\} - L (\theta^*)
    &= \| \theta_\lambda - \theta^* \|_\Sigma^2 + \lambda \| \theta_\lambda \|^2 \\
    &= \lambda^2 \| \Sigma^{1/2} (\Sigma + \lambda)^{-1} \theta^* \|^2 + \lambda \| (\Sigma + \lambda)^{-1} \Sigma \theta^* \|^2 \\
    &= \big\langle \big( \lambda^2 (\Sigma + \lambda)^{-2} \Sigma + \lambda (\Sigma + \lambda)^{-2} \Sigma^2 \big) \theta^*, \theta^* \big\rangle \\
    &= \big\langle \lambda (\Sigma + \lambda)^{-1} \Sigma \theta^*, \theta^* \big\rangle \\
    &= \lambda \| (\Sigma + \lambda)^{-1/2} \Sigma^{1/2} \theta^* \|^2
      \, .
      \qedhere
  \end{align*}
\end{proof}

Finally, plugging Lemmas~\ref{lem:trace-df}, \ref{lem:bound-matrix-bias} and~\ref{lem:reform-bias-ridge} (the first for the variance term, the last two for the bias term) into the decomposition of Lemma~\ref{lem:bias-var-ridge},
we obtain the following excess risk bound. 

\begin{theorem}
  \label{thm:risk-bound-ridge}
  Under Assumptions~\ref{ass:wellspec} and~\ref{ass:bounded-X}, we have for every $\lambda>0$,
\begin{equation}
\label{eq:excessrisk-ridge}
  \E [ \excessrisk (\wh \theta_\lambda) ]
  \leq 
  \Big( 1 + \frac{R^2}{\lambda n} \Big)^2
  \inf_{\theta \in \R^d} \big\{ L (\theta) + \lambda \| \theta \|^2 - L (\theta^*) \big\}
  + \Big( 1 + \frac{R^2}{\lambda n} \Big) \frac{\sigma^2 \tr [ (\Sigma + \lambda )^{-1} \Sigma ]}{n}
  \, .
\end{equation}
\end{theorem}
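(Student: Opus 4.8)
The proof of Theorem~\ref{thm:risk-bound-ridge} is now essentially a bookkeeping exercise: every nontrivial estimate has been isolated in the preceding lemmas, and what remains is to assemble them. The plan is to start from the bias--variance decomposition of Lemma~\ref{lem:bias-var-ridge}, namely
\begin{equation*}
  \E [ \excessrisk (\wh \theta_\lambda) ]
  \leq \lambda^2 \E [ \langle (\wh \Sigma_n + \lambda)^{-1} \Sigma (\wh \Sigma_n + \lambda)^{-1} \theta^*, \theta^*\rangle ]
  + \frac{\sigma^2}{n} \cdot \E [ \tr \{ (\wh \Sigma_n + \lambda)^{-1} \Sigma \} ]
  \, ,
\end{equation*}
and to treat the two terms separately.

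\textbf{Variance term.} For the second term, apply directly the upper bound in Lemma~\ref{lem:trace-df}:
$\E \tr [ (\wh \Sigma_n + \lambda)^{-1} \Sigma ] \le ( 1 + R^2/(\lambda n) ) \tr [ (\Sigma + \lambda)^{-1} \Sigma ]$, so that
$\frac{\sigma^2}{n} \E [ \tr \{ (\wh \Sigma_n + \lambda)^{-1} \Sigma \} ]
\le ( 1 + R^2/(\lambda n) ) \frac{\sigma^2 \tr [ (\Sigma + \lambda)^{-1} \Sigma ]}{n}$,
which is exactly the second term on the right-hand side of~\eqref{eq:excessrisk-ridge}.

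\textbf{Bias term.} For the first term, I would use Lemma~\ref{lem:bound-matrix-bias}: since $\theta^*$ is a fixed vector and the order $\mleq$ is preserved under the quadratic form $v \mapsto \langle \cdot\, v, v\rangle$, we get
\begin{equation*}
  \E [ \langle (\wh \Sigma_n + \lambda)^{-1} \Sigma (\wh \Sigma_n + \lambda)^{-1} \theta^*, \theta^*\rangle ]
  \leq \Big( 1 + \frac{R^2}{\lambda n} \Big)^2 \lambda^{-1} \langle (\Sigma + \lambda)^{-1} \Sigma\, \theta^*, \theta^* \rangle
  \, .
\end{equation*}
Multiplying by $\lambda^2$ yields $\lambda^2 \times \lambda^{-1} = \lambda$ in front, so the first term is bounded by
$( 1 + R^2/(\lambda n) )^2 \, \lambda \langle (\Sigma + \lambda)^{-1} \Sigma\, \theta^*, \theta^* \rangle
= ( 1 + R^2/(\lambda n) )^2 \, \lambda \| (\Sigma + \lambda)^{-1/2} \Sigma^{1/2} \theta^* \|^2$, where the last equality is just the identity $\langle (\Sigma+\lambda)^{-1}\Sigma\,\theta^*,\theta^*\rangle = \|(\Sigma+\lambda)^{-1/2}\Sigma^{1/2}\theta^*\|^2$ (since $(\Sigma+\lambda)^{-1}$ and $\Sigma$ commute and are positive semidefinite, their square roots commute). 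By Lemma~\ref{lem:reform-bias-ridge}, this equals $( 1 + R^2/(\lambda n) )^2 \inf_{\theta \in \R^d} \{ L(\theta) + \lambda \|\theta\|^2 - L(\theta^*) \}$, matching the first term of~\eqref{eq:excessrisk-ridge}. Adding the two bounds completes the proof.

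There is essentially no obstacle here — the only point requiring a moment's care is the passage from the Loewner inequality of Lemma~\ref{lem:bound-matrix-bias} to the scalar inequality for the quadratic form at $\theta^*$, which is immediate from the definition of $\mleq$, and the matching of powers of $\lambda$ (the $\lambda^2$ from the decomposition cancels against the $\lambda^{-1}$ from Lemma~\ref{lem:bound-matrix-bias}, leaving the single factor $\lambda$ that Lemma~\ref{lem:reform-bias-ridge} expects). Everything else is direct substitution.
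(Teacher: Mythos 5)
Your proposal is correct and follows exactly the paper's own argument: the paper likewise obtains Theorem~\ref{thm:risk-bound-ridge} by plugging Lemma~\ref{lem:trace-df} into the variance term and Lemmas~\ref{lem:bound-matrix-bias} and~\ref{lem:reform-bias-ridge} into the bias term of the decomposition in Lemma~\ref{lem:bias-var-ridge}. The bookkeeping of the powers of $\lambda$ and the passage from the Loewner bound to the quadratic form at $\theta^*$ are both handled correctly.
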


\section{Discussion}
\label{sec:discussion}

\paragraph*{Comments on the bound~\eqref{eq:excessrisk-ridge}.}

For $\lambda \geq c R^2 / n$, the bound~\eqref{eq:excessrisk-ridge} is at most
\begin{equation*}
  C\cdot 
\Big(
 \inf_{\theta \in \R^d} \big\{ L (\theta) + \lambda \| \theta \|^2 - L (\theta^*) \big\}
 + \frac{\sigma^2 \tr [ (\Sigma + \lambda )^{-1} \Sigma ]}{n} \Big) \, ,
\end{equation*}
where $c, C$ are constants.
In addition, the first term above is at most $\lambda \| \theta^* \|^2$ (take $\theta = \theta^*$ instead of $\inf_\theta$).
A bound in finite dimension can be deduced by letting $\lambda \asymp R^2 / n$ and bounding $d_\lambda := \tr [ (\Sigma + \lambda)^{-1} \Sigma ] \leq d$, which yields a $O ( (\sigma^2 d + R^2 \| \theta^* \|^2) / n )$ bound.

Both the variance and the bias term in~\eqref{eq:excessrisk-ridge} are distribution-dependent; they depend, respectively, on the spectrum of $\Sigma$ (through the effective dimension $d_\lambda = \tr [ (\Sigma + \lambda)^{-1} \Sigma ]$) and on the approximation properties of Euclidean balls of $\R^d$.

As shown in the lower bound of Lemma~\ref{lem:trace-df}, the upper bound on the variance term from the decomposition of Lemma~\ref{lem:bias-var-ridge} is sharp up to universal constants in the regime $n \gtrsim R^2/\lambda$.
We note that the variance term from Lemma~\ref{lem:trace-df} is itself only an upper bound on the actual variance
(after bounding $(\wh \Sigma_n + \lambda)^{-1}\wh \Sigma_n \mleq I$),
though it is generally of the correct order (an exception is the     ``interpolation'' regime~\cite{bartlett2020benign}, where $\lambda$ is very small or equal to $0$ and $n \ll d$).
For instance, under the ``nonparametric'' regime $d \gg n$ and under a polynomial decay of eigenvalues of $\Sigma$, namely if $\tr (\Sigma^{1/b}) \leq B$ for some $b > 1$ and $B > 0$, then $d_\lambda \leq 2 B \lambda^{-1/b}$ and this gives the optimal variance under this assumption~\cite{caponnetto2007optimal}.

The bound on the bias term is somewhat less accurate (in particular, there is no matching lower bound in Lemma~\ref{lem:bound-matrix-bias}), though still of correct order in some relevant regimes.
Ideally, one may wish to replace $\wh \Sigma_n$ by $\Sigma$ (at least for $n$ large enough) in the bias term of Lemma~\ref{lem:bias-var-ridge}, leading to a term of
\begin{equation*}
  \lambda^2 \langle (\Sigma + \lambda)^{-1} \Sigma (\Sigma + \lambda)^{-1} \theta^*, \theta^*\rangle
  = L (\theta_\lambda) - L (\theta^*)
  \, ,
\end{equation*}
where $\theta_\lambda = \argmin_{\theta \in \R^d} \{ L (\theta) + \lambda \norm{\theta}^2 \} = (\Sigma + \lambda)^{-1} \Sigma \theta^*$.
Instead, the bound~\eqref{eq:excessrisk-ridge} gives a term $L (\theta_\lambda) - L (\theta^*) + \lambda \norm{\theta_\lambda}^2$, with an additional $\lambda \norm{\theta_\lambda}^2$ component.
Roughly speaking, this extra term dominates $L (\theta_\lambda) - L (\theta^*)$ when $\theta^*$ is highly aligned with the leading eigenvectors of $\Sigma$.
Similarly to the variance term, a simple way to assess this bound is to consider the stylized nonparametric regime, with $d$ large or infinite, and polynomial decay (this time, of coefficients of $\theta^*$ in the basis of eigenvectors of $\Sigma$).
Specifically, assume as in~\cite{caponnetto2007optimal} that $\norm{\Sigma^{(1-r)/2} \theta^*} \leq \rho$ for some $r > 0$ and $\rho > 0$; the parameter $r> 0$ controls the rate of decay of components of $\theta^*$.
One can bound the bias term as $C(\rho) \lambda^{\min(r,1)}$, while it is known (\eg, from~\cite{caponnetto2007optimal}) that the actual bias of ridge can be bounded as $\wt C(\rho) \lambda^{\min (r, 2)}$ under these assumptions\footnote{Different estimators can give better bias terms, see \eg \cite{blanchard2018optlip} and references therein.}.
The bound is therefore of the correct order for $0 < r \leq 1$, but suboptimal in the regime $r > 1$.

\paragraph*{Additional comments and references.}

A possible approach to analyzing ridge regression is based on viewing it as empirical risk minimization and using tools from empirical process theory together with localization and fixed-point arguments \cite{vandegeer1999empirical,massart2000some,bartlett2005local,koltchinskii2006local}, see for instance \cite[Example~2 p.~86]{koltchinskii2011oracle},  \cite{mendelson2003performance} and \cite{vaskevicius2020suboptimality} for analyses in this spirit.

A direct approach is based on matrix concentration~\cite{rudelson1999random,lustpiquard1991noncommutative,ahlswede2002strong,oliveira2010sums,tropp2012user}.
Results in this direction were first derived in \cite{devito2005model} and then refined in a series of works \cite{devito2005lip, smale2005shannon2, smale2007integral,caponnetto2007optimal}.
In particular, optimal bounds depending on the effective dimension %
$d_\lambda$
were first derived in \cite{caponnetto2007optimal}, see also \cite{hsu2014ridge,steinwart2009optimal,blanchard2018optlip}.
In fact, two-sided matrix concentration is not necessary to control the error, and one-sided lower bounds on the sample covariance matrix suffice, see~\cite{oliveira2016covariance,lecue2016performance,catoni2016pac,mourtada2019leastsquares,zhivotovskiy2021dimension} for results of this type.
In a different direction, a risk analysis of ridge regression (assuming that $X$ is a sub-Gaussian random vector, see~\cite{koltchinskii2017operators} for matrix concentration results in this case) covering more regimes of choices of $\theta^*,\Sigma, \lambda, n$ can be found in~\cite{tsigler2020benign}.

The elementary analysis of ridge regression presented here does not explicitly rely on matrix concentration (or lower tail) results.
The main arguments, namely exchangeability, matrix perturbation and matrix convexity, were also used in~\cite{mourtada2022logistic}, but for different estimators and for conditional density estimation rather than regression.
For ridge regression, an analysis related to the one presented in this note, based on average stability arguments, is proposed in~\cite{vaskevicius2020suboptimality}.
Additional relevant works include~\cite{koren2015expconcave,gonen2018stability}, that use average stability to analyze empirical minimization in exp-concave statistical learning (with bounds depending on the dimension $d$).
It is worth noting that, while exchangeability/leave-one-out/average stability arguments typically lead to simple and direct proofs, a shortcoming of these approaches is that they generally give in-expectation rather than deviation bounds.

Finally, a precise in-expectation finite-sample analysis of estimators based on stochastic gradient descent (SGD) can be found in~\cite{dieuleveut2016nonparametric} (extending results in~\cite{bach2013nonstrongly}), see also~\cite{yao2007early,rosasco2015incremental,jain2018par} (and references therein) for more information on SGD for least squares.
The arguments of~\cite{dieuleveut2016nonparametric} are direct and do not rely on matrix concentration either.
This analysis is however specific to iterative methods such as stochastic gradient descent, and it is unclear whether it applies to ridge regression.

\appendix
\section{Operator convexity and Sherman-Morrison's identity
}
\label{sec:appendix-lemmas}

In this appendix, we provide for the sake of completeness statements and short proofs of facts used in the proofs of Lemmas~\ref{lem:trace-df} and~\ref{lem:bound-matrix-bias}. 

We start with (operator) convexity of the matrix inverse.
\begin{lemma}[Lemma~2.7 in \cite{carlen2010trace}]
  \label{lem:operator-convexity-inverse}
  Let $S$ be a symmetric, positive semi-definite matrix.
  Then, the map
  \begin{equation*}
    A \mapsto \tr (A^{-1} S)
    \, ,
  \end{equation*}
  defined on the cone of positive-definite matrices, is convex.
\end{lemma}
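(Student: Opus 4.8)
The plan is to deduce the statement from the convexity, for each fixed vector $v \in \R^d$, of the scalar map $A \mapsto \langle A^{-1} v, v\rangle$ on the cone of positive-definite matrices. Granting this, one writes, using cyclicity of the trace and the PSD square root $S^{1/2}$ of $S$ (well-defined since $S \mgeq 0$),
\[
  \tr(A^{-1} S) = \tr\big(S^{1/2} A^{-1} S^{1/2}\big) = \sum_{j=1}^d \langle A^{-1} u_j, u_j\rangle, \qquad u_j := S^{1/2} e_j,
\]
so that $A \mapsto \tr(A^{-1}S)$ is a finite sum of convex functions of $A$, hence convex. This reduces matters to the scalar claim.

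To prove convexity of $A \mapsto \langle A^{-1}v,v\rangle$, I would use the variational (Legendre-type) identity
\[
  \langle A^{-1} v, v\rangle = \sup_{w \in \R^d} \big\{ 2 \langle v, w\rangle - \langle A w, w\rangle \big\},
\]
valid for every positive-definite $A$: for fixed $A$, the right-hand side is a strictly concave quadratic in $w$ whose gradient $2v - 2Aw$ vanishes at $w = A^{-1}v$, and evaluating there gives $2\langle v, A^{-1}v\rangle - \langle v, A^{-1}v\rangle = \langle A^{-1}v, v\rangle$, so the supremum is attained and equals the left-hand side. Now, for each fixed $w$ the map $A \mapsto 2\langle v, w\rangle - \langle A w, w\rangle$ is \emph{affine} in $A$; hence $A \mapsto \langle A^{-1}v,v\rangle$ is a pointwise supremum of affine functions of $A$, and is therefore convex. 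Summing this over $v \in \{u_1, \dots, u_d\}$ as above concludes.

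The only point needing a (short) verification is the variational identity, which just amounts to maximizing an explicit concave quadratic and is not a real obstacle; an alternative that sidesteps it is via Schur complements, using that for $A$ positive-definite one has $\langle A^{-1}v,v\rangle \leq t$ if and only if $\left(\begin{smallmatrix} A & v \\ v^\top & t\end{smallmatrix}\right) \mgeq 0$, so that the epigraph of $(A,v) \mapsto \langle A^{-1}v,v\rangle$ is a slice of the (convex) PSD cone, whence joint convexity in $(A,v)$. I would also note in passing that the same scalar argument, applied for all $v$, shows that $A \mapsto A^{-1}$ is operator convex — since $\langle ((A+B)/2)^{-1}v,v\rangle \leq \tfrac12\langle A^{-1}v,v\rangle + \tfrac12\langle B^{-1}v,v\rangle$ for every $v$ — which is the precise form in which convexity of the matrix inverse is invoked in the proofs of Lemmas~\ref{lem:trace-df} and~\ref{lem:bound-matrix-bias}.
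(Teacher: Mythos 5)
Your proof is correct, but it takes a genuinely different route from the paper's. The paper reduces the claim, via continuity and midpoint convexity, to the operator inequality $\big(\frac{A+B}{2}\big)^{-1} \mleq \frac{A^{-1}+B^{-1}}{2}$, which it proves by conjugating with $A^{-1/2}$ to replace $(A,B)$ by $(I,C)$ with $C = A^{-1/2} B A^{-1/2}$, diagonalizing $C$, and invoking the scalar convexity of $x \mapsto 1/x$ entrywise. You instead write $\tr(A^{-1}S) = \sum_{j} \langle A^{-1} u_j, u_j\rangle$ with $u_j = S^{1/2} e_j$ and represent each summand as a Legendre-type supremum $\langle A^{-1}v,v\rangle = \sup_{w}\{2\langle v,w\rangle - \langle Aw,w\rangle\}$ of functions affine in $A$; both the identity and the reduction check out. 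Your route buys a few things: it gives full (not merely midpoint) convexity with no appeal to continuity; it yields joint convexity in $(A,v)$ via the Schur-complement variant; and, as you rightly observe, it delivers the operator-convexity statement $\big(\frac{A+B}{2}\big)^{-1} \mleq \frac{A^{-1}+B^{-1}}{2}$ itself, which is the form actually invoked in the proofs of Lemmas~\ref{lem:trace-df} and~\ref{lem:bound-matrix-bias} (through the operator concavity of $x \mapsto x(x+\lambda)^{-1} = 1 - \lambda (x+\lambda)^{-1}$), whereas the trace statement alone would not suffice there. The paper's argument is arguably more elementary at the matrix-algebra level, using only simultaneous conjugation, diagonalization and scalar convexity; yours is shorter once the variational identity is granted and adapts more readily to the infinite-dimensional (RKHS) setting mentioned in Section~\ref{sec:risk-analysis-ridge}, where reduction to a diagonal matrix is less convenient.
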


\begin{proof}
  By continuity of the map $A \mapsto \tr (A^{-1} S)$ on its domain, it suffices to prove that it is midpoint-convex.
  Hence, it suffices to show that, for any positive-definite matrices $A, B$,
  \begin{equation*}
    \Big( \frac{A + B}{2} \Big)^{-1}
    \mleq \frac{A^{-1} + B^{-1}}{2}
    \, .
  \end{equation*}
  Now, letting $C = A^{-1/2} B A^{-1/2}$, since
  \begin{equation*}
    \Big( \frac{A + B}{2} \Big)^{-1}
    = A^{-1/2} \Big( \frac{I + C}{2} \Big)^{-1} A^{-1/2}
  \end{equation*}
  and
  \begin{equation*}
    \frac{A^{-1} + B^{-1}}{2}
    = A^{-1/2} \Big( \frac{I + C^{-1}}{2} \Big) A^{-1/2}
    \, ,
  \end{equation*}
  it suffices to show that $(I + C)^{-1}/2 \mleq (I + C^{-1})/2$.
  Up to conjugating with a rotation, one may assume that $C$ is diagonal.
  In this case, the desired inequality follows from the convexity of the (scalar) inverse on $\R_+^*$, applied to each entry of the diagonal.
\end{proof}

We also use the following identity involving the inverse of rank-one perturbations of matrices, which follows from the Sherman-Morrison identity.

\begin{lemma}
  \label{lem:sherman-morrison}
  Let $S$ be a positive-definite $d \times d$ matrix, and $v \in \R^d$.
  Then, one has
  \begin{align}
    \label{eq:sherman-lemma}
    S^{-1} v
    &= \big( 1 + \langle S^{-1} v, v\rangle \big) (S + v v^\top)^{-1} v
      \, .
  \end{align}
\end{lemma}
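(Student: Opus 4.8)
The plan is to avoid invoking the Sherman--Morrison formula as a black box and instead verify the identity directly by multiplying through by the matrix $S + v v^\top$. Since $S$ is positive definite and $v v^\top$ is positive semi-definite, the sum $S + v v^\top$ is positive definite, hence invertible; therefore the claimed identity~\eqref{eq:sherman-lemma} is equivalent to
\[
  (S + v v^\top) S^{-1} v = \big( 1 + \langle S^{-1} v, v \rangle \big) v \, .
\]

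To establish this, I would simply expand the left-hand side: $(S + v v^\top) S^{-1} v = S S^{-1} v + v v^\top S^{-1} v = v + v \, (v^\top S^{-1} v)$. Since $v^\top S^{-1} v = \langle S^{-1} v, v \rangle$ is a scalar, this equals $\big( 1 + \langle S^{-1} v, v \rangle \big) v$, which is exactly the right-hand side. Applying $(S + v v^\top)^{-1}$ to both sides then yields~\eqref{eq:sherman-lemma}.

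There is essentially no obstacle here; the only point worth spelling out is the invertibility of $S + v v^\top$ (needed before multiplying by its inverse), which is immediate from positive-definiteness of $S$. Equivalently, one could deduce the identity from the Sherman--Morrison formula $(S + v v^\top)^{-1} = S^{-1} - \big( 1 + \langle S^{-1} v, v \rangle \big)^{-1} S^{-1} v v^\top S^{-1}$ by applying it to the vector $v$ and collecting the scalar coefficient $1 - \langle S^{-1} v, v\rangle / (1 + \langle S^{-1} v, v\rangle) = (1 + \langle S^{-1} v, v\rangle)^{-1}$, but the direct verification above is shorter and self-contained.
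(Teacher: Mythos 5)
Your proof is correct and essentially matches the paper's: the paper derives \eqref{eq:sherman-lemma} from the full Sherman--Morrison formula, itself justified by multiplying through by $S + v v^\top$, which is exactly the computation you carry out directly on the vector identity (and your closing remark reproduces the paper's route verbatim). Your direct verification is marginally shorter and equally valid.
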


\begin{proof}
  We recall the Sherman-Morrison identity:
  \begin{equation*}
    (S + v v^\top)^{-1}
    = S^{-1} - \frac{S^{-1} v v^\top S^{-1}}{1 + \langle S^{-1} v, v\rangle}
    \, ,
  \end{equation*}
  which can be checked by multiplying both sides by $S+ v v^\top$,
  and deduce that
  \begin{equation*}
    (S + v v^\top)^{-1} v
    = S^{-1} v - \frac{\langle S^{-1} v, v\rangle}{1 + \langle S^{-1} v, v\rangle} S^{-1} v 
    = \frac{S^{-1} v}{1 + \langle S^{-1} v, v\rangle}
    \, .
    \qedhere
  \end{equation*}
\end{proof}

\paragraph{Acknowledgements.}
We would like to thank Francis Bach for encouraging us to share this
note and for many helpful comments.
L.R.~acknowledges the financial support of the European Research Council (grant SLING 819789), the AFOSR projects FA9550-18-1-7009, FA9550-17-1-0390 and BAA-AFRL-AFOSR-2016-0007 (European Office of Aerospace Research and Development), and the EU H2020-MSCA-RISE project NoMADS - DLV-777826.

{\small
  \bibliographystyle{abbrv}
  
}

\end{document}